\def\mathcal{\mathscr}
\newfont{\aaa}{cmb10 at 19pt}
\newfont{\bbb}{cmb10 at 11pt}
\newtheorem{lem}{Lemma}[]
\newtheorem{thm}{Theorem}
\newtheorem{rem}{Remark}[]
\newtheorem{definition}{Definition}[]
\newtheorem{conjecture}{Conjecture}[]
\newtheorem{pro}{Proposition}[]
\def\ZZ{\mathbb Z}
\newcommand{\beq}{\begin{equation}}
\newcommand{\eeq}{\end{equation}}
\newcommand{\bey}{\begin{eqnarray}}
\newcommand{\eey}{\end{eqnarray}}
\newcommand{\beyy}{\begin{eqnarray*}}
\newcommand{\eeyy}{\end{eqnarray*}}
\def\@evenhead{
   \vbox{\hbox to \textwidth
{}{\hspace{0mm}{\footnotesize \thepage}}{\hspace{8cm}
   {\footnotesize {Ming Ding}}}
  \protect\vspace{1truemm}\relax
   \hrule depth0pt height0.15truemm width\textwidth
   }}
   \def\@evenfoot{}
\def\@oddhead{
    \vbox{\hbox to \textwidth
  {{\hspace{0cm}{\footnotesize On  quantum cluster algebras of finite
type}
  \hfill{\footnotesize \thepage}}\hspace{0mm}}{}
   \protect\vspace{1truemm}\relax
   \hrule depth0pt height0.15truemm width\textwidth
  }}
  \def\@oddfoot{}
\begin{document}

%首页眉设定%%%%%%%%%%%%%%%%%%%%%%%%%%%%%%%%%%%%%%%%%%%%%%%%%%
%\setcounter{page}{61} %
%\thispagestyle{empty} \thispagestyle{fancy} {
  %\fancyhead[lO,RE]{\footnotesize  Front. Math. China \\
 % DOI .../...-...-...-...\\[3mm]
  %\includegraphics[0,-50][0,0]{11.bmp}}
  %\fancyhead[RO,LE]{\scriptsize \bf %http://www.spcum.hep.com.cn\\
%} \fancyfoot[CE,CO]{}}
%\renewcommand{\headrulewidth}{0pt}%(首页书眉线宽）
%\renewcommand{\headsep}{0.7cm}%（首页正文与书眉距离）

%首页眉设定%%%%%%%%%%%%%%%%%%%%%%%%%%%%%%%%%%%%%%%%%%%%%%%%%%

%%%以下正文开始
\setcounter{page}{1}
\qquad\\[8mm]

\noindent{\aaa{On  quantum cluster algebras of finite
type}}\\[1mm]

\noindent{\bbb Ming Ding}\\[-1mm]

\noindent\footnotesize{Institute for advanced study, Tsinghua University, Beijing 100084, China}\\[6mm]

%\vskip-2mm \noindent{\footnotesize$\copyright$ Higher Education
%Press and Springer-Verlag Berlin Heidelberg 2011} \vskip 4mm

\normalsize\noindent{\bbb Abstract}\quad We extend the definition of
a quantum analogue of the Caldero-Chapoton map defined \cite{rupel}.
When $Q$ is a quiver of finite type, we prove that the algebra
$\mathcal{AH}_{|k|}(Q)$ generated by all cluster characters (see
Definition \ref{def}) is exactly the quantum cluster algebra
$\mathcal{EH}_{|k|}(Q)$.\vspace{0.3cm}

\footnotetext{E-mail: m-ding04@mails.tsinghua.edu.cn}

\noindent{\bbb Keywords}\quad cluster variable, quantum cluster algebra\\
{\bbb MSC}\quad 16G20\\[0.4cm]

\noindent{\bbb{1\quad Introduction}}\\[0.1cm]
Quantum cluster algebras  were introduced by A. Berenstein and A.
Zelevinsky \cite{berzel}  to  study  the canonical basis.  When
$q=1,$ the quantum cluster algebras are exactly the corresponding
cluster algebras which were introduced and studied  by S. Fomin and
A. Zelevinsky in a series of papers \cite{ca1}\cite{ca2}\cite{BFZ}.
A quantum analogue of the Caldero-Chapoton formula \cite{caldchap}
was defined by D.~Rupel \cite{rupel} and the author
 conjectured that cluster variables could be expressed using this formula and proved it for the cluster variables in finite types as well
as in almost acyclic clusters. Later this conjecture was confirmed
for acyclic equally valued quivers in \cite{fanqin}. Quantum cluster
algebra  structures  have been studied in a few cases, see for
example
\cite{j}\cite{rupel}\cite{lampe}\cite{DX0}\cite{fanqin}\cite{DX}.

The cluster category was  introduced for its combinatorial
similarities with cluster algebras. In contrast to the case of
cluster algebras, for any objects $M,N$ in the cluster category
associated to a quantum cluster algebra, it does not generally hold
that $X_{N}X_{M}=|k|^{\pm\frac{1}{2}n_{N\oplus M}}X_{N\oplus M}$ for
any $n_{N\oplus M}\in \ZZ$. Thus the natural problem  is to ask if
$X_{N\oplus M}$ is in the corresponding quantum cluster algebra.
Hence it becomes interesting to study the relation between the
algebra generated by all cluster characters (see Definition
\ref{def}) and the corresponding quantum cluster algebra. In the
case of cluster algebras, these are equal for finite and affine
types \cite{CK2005}\cite{DXX}. In \cite{g1}\cite{g2}, C. Geiss, B.
Leclerc and J. Schr$\ddot{o}$er have proved that a large class of
cluster algebras always contain cluster characters of all objects in
the cluster categories. The aim of this article is to prove that for
any quiver $Q$ of finite type, the algebra $\mathcal{AH}_{|k|}(Q)$
generated by all cluster characters is still the quantum cluster
algebra $\mathcal{EH}_{|k|}(Q)$.
\noindent \\[4mm]

\noindent{\bbb 2\quad Preliminaries and statement of the main result}\\[0.1cm]
\noindent{\bbb 2.1\quad Definition of quantum cluster algebras}
 Let $L$ be a lattice of rank $m$ and $\Lambda:L\times L\to
\ZZ$ a skew-symmetric bilinear form. Note that $\Lambda$ can  be
identified with an $m\times m$ skew-symmetric matrix which still
denoted by $\Lambda$ if there is no confusion. Set a formal variable
$q$ and
 the ring of integer Laurent polynomials $\ZZ[q^{\pm1/2}]$.
Define the \textit{based quantum torus} associated to the pair
$(L,\Lambda)$ to be the $\ZZ[q^{\pm1/2}]$-algebra $\mathcal{T}$ with
a distinguished $\ZZ[q^{\pm1/2}]$-basis $\{X^e: e\in L\}$ and the
multiplication
\[X^eX^f=q^{\Lambda(e,f)/2}X^{e+f}.\]
  It is known that $\mathcal{T}$ is  contained in its
skew-field of fractions $\mathcal{F}$. A \textit{toric frame} in
$\mathcal{F}$ is a map $M: \ZZ^m\to \mathcal{F} \setminus \{0\}$
given by
\[M({\bf c})=\varphi(X^{\eta({\bf c})})\] where $\varphi$ is an
automorphism of $\mathcal{F}$ and $\eta: \ZZ^m\to L$ is an
isomorphism of lattices. By the definition, the elements $M({\bf
c})$ form a $\ZZ[q^{\pm1/2}]$-basis of the based quantum torus
$\mathcal{T}_M:=\varphi(\mathcal{T})$ and satisfy the following
relations:
\[M({\bf c})M({\bf d})=q^{\Lambda_M({\bf c},{\bf d})/2}M({\bf c}+{\bf d}),\
M({\bf c})M({\bf d})=q^{\Lambda_M({\bf c},{\bf d})}M({\bf d})M({\bf
c}),\]
\[ M({\bf 0})=1,\ M({\bf c})^{-1}=M(-{\bf c}),\]
where $\Lambda_M$ is the skew-symmetric bilinear form on $\ZZ^m$
obtained from the lattice isomorphism $\eta$.  Let $\Lambda_M$ be
 the skew-symmetric $m\times m$ matrix defined by
$\lambda_{ij}=\Lambda_M(e_i,e_j)$ where $\{e_1, \ldots, e_m\}$ is
the standard basis of $\ZZ^m$.  Given a toric frame $M$, let
$X_i=M(e_i)$.  Then we have
$$\mathcal{T}_M=\ZZ[q^{\pm1/2}]\langle X_1^{\pm 1}, \ldots,
X_m^{\pm1}:X_iX_j=q^{\lambda_{ij}}X_jX_i\rangle.$$  An easy
computation shows that:
\[M({\bf c})=q^{\frac{1}{2}\sum_{i<j}
c_ic_j\lambda_{ji}}X_1^{c_1}X_2^{c_2}\cdots X_m^{c_m}=:X^{({\bf c})}
\ \ \ ({\bf c}\in\ZZ^m).\]

Let $\Lambda$ be an $m\times m$ skew-symmetric matrix and
$\tilde{B}$  an $m\times n$ matrix with $n\le m$.  We call the pair
$(\Lambda, \tilde{B})$ \textit{compatible} if up to permuting rows
and columns $\tilde{B}^T\Lambda=(D|0)$  with
$D=diag(d_1,\cdots,d_n)$ where $d_i\in \mathbb{N}$ for $1\leq i\leq
n$. The pair $(M,\tilde{B})$ is called a \textit{quantum seed} if
the pair $(\Lambda_M, \tilde{B})$ is compatible.  Define the
$m\times m$ matrix $E=(e_{ij})$ as follows
\[e_{ij}=\begin{cases}
\delta_{ij} & \text{if $j\ne k$;}\\
-1 & \text{if $i=j=k$;}\\
max(0,-b_{ik}) & \text{if $i\ne j = k$.}
\end{cases}
\]
For $n,k\in\ZZ$, $k\ge0$, denote ${n\brack
k}_q=\frac{(q^n-q^{-n})\cdots(q^{n-k+1}-q^{-n+k-1})}{(q^k-q^{-k})\cdots(q-q^{-1})}$.
Let $k\in[1,n]$ where $[1,n]=\{1,\cdots,n\}$ and ${\bf
c}=(c_1,\ldots,c_m)\in\ZZ^m$ with $c_{k}\geq 0$. Define the toric
frame $M': \ZZ^m\to \mathcal{F} \setminus \{0\}$ as follows
\begin{equation}\label{eq:cl_exp}M'({\bf c})=\sum^{c_k}_{p=0} {c_k \brack p}_{q^{d_k/2}} M(E{\bf c}+p{\bf b}^k),\ \ M'({\bf -c})=M'({\bf c})^{-1}.\end{equation}
where the vector ${\bf b}^k\in\ZZ^m$ is the $k$th column of
$\tilde{B}$.  Following \cite{ca1}, we say a real $m\times n$ matrix
$\tilde{B}'$ is obtained from $\tilde{B}$  by matrix mutation in
direction $k$ if the entries of $\tilde{B}'$ are given by
\[b'_{ij}=\begin{cases}
-b_{ij} & \text{if $i=k$ or $j=k$;}\\
b_{ij}+\frac{|b_{ik}|b_{kj}+b_{ik}|b_{kj}|}{2} & \text{otherwise.}
\end{cases}
\]
Then the quantum seed $(M',\tilde{B}')$ is defined to be the
mutation of $(M,\tilde{B})$ in direction $k$. Two quantum seeds are
called mutation-equivalent if they can be obtained from each other
by a sequence of mutations. Let $\mathcal{C}=\{M'(e_i): i\in[1,n]\}$
where $(M',\tilde{B}')$ is mutation-equivalent to $(M,\tilde{B})$.
The elements of $\mathcal{C}$ are called the \textit{cluster
variables}. Let $\mathbb{P}=\{M(e_i): i\in[n+1,m]\}$ and  the
elements of $\mathbb{P}$ are called \emph{coefficients}. Denote by
$\ZZ\mathbb{P}$ the ring of Laurent polynomials generated by
$q^{\frac{1}{2}},\mathbb{P}$ and their inverses. Then the
\textit{quantum cluster algebra}
$\mathcal{A}_q(\Lambda_M,\tilde{B})$ is defined to be the
$\ZZ\mathbb{P}$-subalgebra of $\mathcal{F}$ generated by
$\mathcal{C}$.\\[0.1cm]

\noindent{\bbb 2.2\quad The quantum Caldero-Chapoton map and  main
result} Let $k$ be a finite field with cardinality $|k|=q$ and
$m\geq n$ be two positive integers and $\widetilde{Q}$ an acyclic
valued quiver with vertex set $\{1,\ldots,m\}$. Denote the subset
$\{n+1,\dots,m\}$ by $C$. The full subquiver $Q$ on the vertices
$1,\ldots,n$ is called the \emph{principal part} of $\widetilde{Q}$.
For $1\leq i\leq m$, let $S_i$ be the $i$th simple module for
$k\widetilde{Q}.$

Let $\widetilde{B}$ be the $m\times n$ matrix associated to the
quiver $\widetilde{Q}$ whose entry in position $(i,j)$ given by
\[
b_{ij}=|\{\mathrm{arrows}\, i\longrightarrow
j\}|-|\{\mathrm{arrows}\, j\longrightarrow i\}|
\]
for $1\leq i\leq m$, $1\leq j\leq n$. Denote by $\widetilde{I}$ the
left $m\times n$ submatrix of the identity matrix of size $m\times
m$. Assume that there exists some antisymmetric $m\times m$ integer
matrix $\Lambda$ such that
\begin{align}\label{eq:simply_laced_compatible}
\Lambda(-\widetilde{B})=\begin{bmatrix}I_n\\0
\end{bmatrix},
\end{align}
where $I_n$ is the identity matrix of size $n\times n$. Let
$\widetilde{R}=\widetilde{R}_{\widetilde{Q}}$ be the $m\times n$
matrix with its entry in position $(i,j)$ given by
\[
\widetilde{r}_{ij}:=\mathrm{dim}_{k}\mathrm{Ext}^{1}_{k\widetilde{Q}}(S_j,S_i)=|\{\mathrm{arrows}\,
j\longrightarrow i\}|.
\] for $1\leq i\leq m$, $1\leq j\leq n$. Set
$\widetilde{R}^{tr}=\widetilde{R}_{\widetilde{Q}^{op}}.$
 Denote the principal $n\times n$ submatrices of
 $\widetilde{B}$ and $\widetilde{R}$ by $B$ and $R$
respectively. Note that
$\widetilde{B}=\widetilde{R}^{tr}-\widetilde{R}$ and $B=R^{tr}-R$.

Let $\mathcal C_{\widetilde{Q}}$ be the cluster category of $k
\widetilde{Q}$, i.e., the orbit category of the derived category
$\mathcal{D}^b(\widetilde{Q})$ under the action of  the functor
$F=\tau\circ[-1]$ (see \cite{BMRRT}). Let  $I_i$ be the
indecomposable injective $k \widetilde{Q}$ module for $1\leq i \leq
m.$ Then the indecomposable $k \widetilde{Q}$-modules and $I_i[-1]$
for $1\leq i \leq m$ exhaust all indecomposable objects of the
cluster category $\mathcal C_{\widetilde{Q}}$. Each object $M$ in
$\mathcal C_{\widetilde{Q}}$ can be uniquely decomposed as
$$M=M_0\oplus I_M[-1]$$
where $M_0$ is a module and $I_M$ is an injective  module.

The Euler form on $k \widetilde{Q}$-modules $M$ and $N$ is given by
$$\langle M,N\rangle=\mathrm{dim}_{k}\mathrm{Hom}(M,N)-\mathrm{dim}_{k}\mathrm{Ext}^{1}(M,N).$$
Note that the Euler form only depends on the dimension vectors of
$M$ and $N$.

The quantum Caldero-Chapoton map of an acyclic quiver
$\widetilde{Q}$ has been defined in \cite{rupel} and \cite{fanqin}.
In \cite{rupel}, the author defined the quantum Caldero-Chapoton map
for $k\widetilde{Q}$-modules while in \cite{fanqin} for
coefficient-free rigid object in $\mathcal C_{\widetilde{Q}}$. For
our purpose, we need to extend these definitions to the following
map
$$X_?: \mathrm{obj}\mathcal C_{\widetilde{Q}}\longrightarrow \mathcal{T}$$
defined by the following rules:\\
(1)\ If $M$ is a $k Q$-module, then
                    $$
                       X_{M}=\sum_{\underline{e}} |\mathrm{Gr}_{\underline{e}} M|q^{-\frac{1}{2}
\langle
\underline{e},\underline{m}-\underline{e}-\underline{i}\rangle}X^{-\widetilde{B}\underline{e}-(\widetilde{I}-\widetilde{R}^{tr})\underline{m}};$$
(2)\ If $M$ is a $k Q$-module and $I$ is an injective $k
\widetilde{Q}$-module, then
                    $$
                       X_{M\oplus I[-1]}=\sum_{\underline{e}} |\mathrm{Gr}_{\underline{e}} M|q^{-\frac{1}{2}
\langle
\underline{e},\underline{m}-\underline{e}-\underline{i}\rangle}X^{-\widetilde{B}\underline{e}-(\widetilde{I}-\widetilde{R}^{tr})\underline{m}+\underline{\mathrm{dim}}
soc I},
                    $$
where $\underline{\mathrm{dim}} I= \underline{i},
\underline{\mathrm{dim}} M= \underline{m}$ and
$\mathrm{Gr}_{\underline{e}}M$ denotes the set of all submodules $V$
of $M$ with $\underline{\mathrm{dim}} V= \underline{e}$. We note
that
$$
X_{P[1]}=X_{\tau P}=X^{\underline{\mathrm{dim}} P/rad
P}=X^{\underline{\mathrm{dim}}\mathrm{soc}I}=X_{I[-1]}=X_{\tau^{-1}I}.
$$
for any projective $k\widetilde{Q}$-module $P$ and injective
$k\widetilde{Q}$-module $I$ with $\mathrm{soc}I=P/\mathrm{rad}P.$ In
the following, we denote by the corresponding underlined lower case
 letter $\underline{x}$ the dimension vector of a $kQ$-module
$X$ and view $\underline{x}$ as a column vector in $\mathbb{Z}^n.$

\begin{definition}\label{def}
$X_{L}$ is called \emph{the corresponding cluster character}, if $L$
is a $k Q$-module or $L=M\oplus I[-1]\in\mathcal C_{\widetilde{Q}}$
satisfying that $M$ is a $k Q$-module and $I$ is an injective $k
\widetilde{Q}$-module.
\end{definition}
For a quiver $Q$, denote by $\mathcal{AH}_{|k|}(Q)$ the
 $\mathbb{ZP}$-subalgebra of $\mathcal{F}$ generated by
all the cluster characters and by $\mathcal{EH}_{|k|}(Q)$ the
corresponding quantum cluster algebra, i.e, the
$\mathbb{ZP}$-subalgebra of $\mathcal{F}$ generated by all the
cluster variables. Note that here we are working over a finite
field, the definition of quantum cluster algebra in section 2.1
remains valid (see \cite{fanqin}). The main result of this article
is the following theorem:

\begin{thm}\label{main}
For any  quiver $Q$ of finite type, we have
$\mathcal{EH}_{|k|}(Q)=\mathcal{AH}_{|k|}(Q).$
\end{thm}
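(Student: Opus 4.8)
The claim is an equality of $\mathbb{ZP}$-subalgebras of $\mathcal{F}$: the algebra $\mathcal{AH}_{|k|}(Q)$ generated by all cluster characters equals the quantum cluster algebra $\mathcal{EH}_{|k|}(Q)$. Since every cluster variable $M'(e_i)$ is itself of the form $X_L$ for a suitable rigid $kQ$-module or object $I[-1]$ (this is precisely the content of the author's earlier conjecture, proved in finite type), the inclusion $\mathcal{EH}_{|k|}(Q)\subseteq\mathcal{AH}_{|k|}(Q)$ is immediate. The whole work is in the reverse inclusion: every $X_L$ with $L$ an arbitrary $kQ$-module (or $M\oplus I[-1]$) lies in $\mathcal{EH}_{|k|}(Q)$.

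I'll start with the observation that $Q$ is of finite type, so there are only finitely many indecomposable $kQ$-modules, each rigid, and each indecomposable $kQ$-module $X$ has $X_X$ a cluster variable (hence in $\mathcal{EH}_{|k|}(Q)$); similarly $X_{I[-1]}$ is a cluster variable for each indecomposable injective $I$. It therefore suffices to show that for any two objects $N, M$ whose cluster characters lie in $\mathcal{EH}_{|k|}(Q)$, the cluster character $X_{N\oplus M}$ of the direct sum also lies there; then induction on the number of indecomposable summands finishes the argument. The natural tool is a multiplication formula expressing the product $X_N X_M$ (computed in the quantum torus $\mathcal{T}$) as $X_{N\oplus M}$ up to a power of $q^{1/2}$ plus correction terms $X_{L}$ coming from the middle terms of the relevant triangles/extensions — the quantum analogue of the Caldero–Keller type multiplication theorem. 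Concretely, for $N, M$ with $\dim\mathrm{Ext}^1(N,M)=1$ (or more generally controlled by a filtration on the extension space), one gets $X_N X_M = q^{\alpha/2}X_{N\oplus M} + q^{\beta/2}X_{E}$ where $E$ runs over middle terms; the key point is that all the $X_E$ appearing have strictly fewer indecomposable summands after one passes to a minimal counterexample, or their dimension vectors are strictly smaller in a suitable partial order.

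**The inductive mechanism.**

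The plan is to induct on $\dim_k N\oplus M$ (or on the pair (number of indecomposable summands, total dimension) ordered lexicographically). Suppose $X_N, X_M \in \mathcal{EH}_{|k|}(Q)$ and we want $X_{N\oplus M}\in\mathcal{EH}_{|k|}(Q)$. If $\mathrm{Ext}^1(N,M)=0=\mathrm{Ext}^1(M,N)$ then in fact $X_N X_M = q^{\gamma/2} X_{N\oplus M}$ directly from the definition of $X_?$ (the Grassmannian of $N\oplus M$ factors), so we are done. Otherwise pick indecomposable summands $N'\mid N$, $M'\mid M$ with $\mathrm{Ext}^1(N',M')\ne 0$, and apply the quantum multiplication formula to $X_{N'}$ and $X_{M'}$: this expresses $q^{\ast}X_{N'\oplus M'}$ as $X_{N'}X_{M'}$ minus a sum of terms $X_{E}$ with $\dim E = \dim N' + \dim M'$ but $E$ having a genuinely nontrivial extension "consumed," so that $E$ is either decomposable into pieces of smaller dimension or is a module of the same dimension but with more summands — in finite type one arranges the induction so these $E$ are handled by the inductive hypothesis (e.g. by inducting on the dimension of the Ext-group together with the total dimension). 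Multiplying the resulting identity by $X_{N/N'}$ and $X_{M/M'}$ (suitably, using the $\mathrm{Ext}^1=0$ reduction to split off summands) and peeling off one incompatible pair at a time reduces the number of "incompatible pairs" of summands, which is a nonnegative integer, giving termination.

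**The main obstacle.**

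The hard part is establishing the quantum multiplication formula in the generality needed — i.e. computing the product $X_N X_M$ inside the noncommutative torus $\mathcal{T}$ and identifying the output with $X_{N\oplus M}$ plus middle-term corrections, with all the $q^{1/2}$-powers tracked correctly. In the classical (commutative) setting this is the Caldero–Keller / Hubery multiplication theorem, and its quantum version requires a careful Ringel–Hall algebra / Euler-form bookkeeping: one must verify that the quadratic $q$-exponents coming from $-\frac{1}{2}\langle\underline e,\underline m-\underline e-\underline i\rangle$ in the two factors combine, via $\Lambda_M$ and the compatibility relation $\Lambda(-\widetilde B)=\bigl[\begin{smallmatrix}I_n\\0\end{smallmatrix}\bigr]$, to exactly the exponent of each term on the right-hand side. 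This is where the assumption \eqref{eq:simply_laced_compatible} and the identity $\widetilde B = \widetilde R^{tr}-\widetilde R$ do the real work. A secondary technical point is handling objects of the form $M\oplus I[-1]$ with a shifted-injective summand: one reduces to the module case using the relation $X_{I[-1]}=X_{\tau^{-1}I}=X^{\underline{\dim}\,\mathrm{soc}\,I}$ recorded in the excerpt, together with the fact that tensoring by such a monomial $X^{\underline{\dim}\,\mathrm{soc}\,I}$ is an invertible operation on $\mathcal{T}$ commuting appropriately with the multiplication formula. Once the multiplication formula is in hand, the inductive descent is routine finite-type combinatorics.
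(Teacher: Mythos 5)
Your overall architecture matches the paper's: in finite type every indecomposable object of $\mathcal C_{\widetilde Q}$ is rigid, so each $X_{L_i}$ is a cluster variable, and one expresses the product $\prod_i X_{L_i}^{n_i}$ as $q^{n_L/2}X_L$ plus correction terms $X_E$ and inducts. The multiplication formula you invoke is indeed the paper's Theorem \ref{hall multi} (quoted from \cite{DX}), and you correctly flag the inclusion $\mathcal{EH}_{|k|}(Q)\subseteq\mathcal{AH}_{|k|}(Q)$ coming from Rupel's result. However, there is a genuine gap at the crux of the argument: the well-founded quantity on which the induction runs. You propose inducting on total dimension, on the number of indecomposable summands, or on dimension vectors ``in a suitable partial order,'' and you assert the correction terms $E$ are ``decomposable into pieces of smaller dimension'' or have ``more summands.'' None of this holds: a middle term $E$ of a non-split extension (or triangle) has exactly the same dimension vector as $N\oplus M$, and typically \emph{fewer} indecomposable summands (it may be indecomposable), so none of your proposed orderings strictly decreases and the recursion is not visibly terminating. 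The paper's key point is Lemma \ref{easy} (a Caldero--Keller argument): for any non-split triangle $M\to E\to N\to M[1]$ one has $\dim_k\mathrm{Ext}^1_{\mathcal C_{\widetilde Q}}(E,E)<\dim_k\mathrm{Ext}^1_{\mathcal C_{\widetilde Q}}(M\oplus N,M\oplus N)$, and the induction is on this self-extension dimension. Your passing mention of ``the dimension of the Ext-group'' does not identify this invariant or justify its strict decrease, and without it the argument does not close.

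The second gap is your treatment of the summands $I[-1]$. You claim one reduces to the module case because $X_{I[-1]}$ is a monomial and ``tensoring by such a monomial is an invertible operation commuting appropriately with the multiplication formula.'' This is false: comparing the definitions, the $q$-exponent of the term of $X_{M\oplus I[-1]}$ indexed by a submodule $H\subseteq M$ differs from that of the corresponding term of $X_M\cdot X^{\underline{\dim}\,\mathrm{soc}\,I}$ by $q^{-[H,I]}$, which depends on $H$; hence $X_{M\oplus I[-1]}$ is \emph{not} a monomial multiple of $X_M$, and $X_MX_{I[-1]}$ is not a single power of $q^{1/2}$ times $X_{M\oplus I[-1]}$. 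The paper's Proposition \ref{exchange2} is exactly the nontrivial computation needed here: $q^{[M,I]}X_MX_{I[-1]}$ equals $q^{*}\sum_{B,I'}|\mathrm{Hom}_{k\widetilde Q}(M,I)_{BI'}|X_{B\oplus I'[-1]}$, with genuine correction terms indexed by the nonzero homomorphisms $M\to I$ (equivalently, nonsplit triangles with middle term $B\oplus I'[-1]$, to which Lemma \ref{easy} again applies). Your proposal would need both of these ingredients supplied before the ``routine finite-type combinatorics'' can be carried out.
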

We conjecture that Theorem \ref{main} holds for any quiver of affine
type.
\begin{conjecture}
For any  quiver $Q$ of affine type, we have
$\mathcal{EH}_{|k|}(Q)=\mathcal{AH}_{|k|}(Q).$
\end{conjecture}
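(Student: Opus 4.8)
The plan is to prove the two inclusions $\mathcal{EH}_{|k|}(Q)\subseteq\mathcal{AH}_{|k|}(Q)$ and $\mathcal{AH}_{|k|}(Q)\subseteq\mathcal{EH}_{|k|}(Q)$ separately. The first is the easy direction: since $Q$ is of finite type, the author's expansion result recalled in the introduction identifies every cluster variable, up to a power of $q^{1/2}\in\ZZ\mathbb{P}$, with $X_{L}$ for a suitable rigid indecomposable object $L$ of $\mathcal{C}_{\widetilde{Q}}$; each such $L$ is either a rigid indecomposable $kQ$-module or one of the $I_i[-1]$, hence is a cluster character in the sense of Definition~\ref{def}. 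Therefore every generator of $\mathcal{EH}_{|k|}(Q)$ already lies in $\mathcal{AH}_{|k|}(Q)$.

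For the reverse inclusion it suffices to show $X_L\in\mathcal{EH}_{|k|}(Q)$ for every object $L=M_0\oplus I_M[-1]$ of $\mathcal{C}_{\widetilde{Q}}$; here finite representation type is used so that $L$ is a direct sum of rigid indecomposable objects. The engine I would use is a quantum cluster multiplication formula of Caldero--Keller type, refined over the finite field $k$: for objects $M,N$ of $\mathcal{C}_{\widetilde{Q}}$ with no common indecomposable summand one should obtain an identity
\[ X_M X_N = q^{c}\,X_{M\oplus N}+\sum_{E} c_E\,X_E \qquad (c\in\tfrac12\ZZ,\ c_E\in\ZZ[q^{\pm1/2}]) \]
in which the coefficient $q^{c}$ of $X_{M\oplus N}$ is a power of $q$ (it comes from the Hall structure constant $g^{M\oplus N}_{M,N}$, which for $M,N$ without a common summand is itself a power of $q$), hence a unit in $\ZZ[q^{\pm1/2}]$, while every $E$ occurring in the correction is the middle term of a non-split extension between $M$ and $N$ in one of the two directions; each such $E$ has the same dimension vector as $M\oplus N$ but is a proper degeneration of it, hence lies strictly above $M\oplus N$ in the degeneration order, and in particular $\mathrm{Ext}^1_{\mathcal{C}}(M,N)=0$ forces $X_MX_N=q^{c}X_{M\oplus N}$. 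I would also need the complementary fact that for a rigid indecomposable $X$ one has $X_{X^{\oplus a}}=q^{c}(X_X)^{a}$, i.e.\ the quantum Caldero--Chapoton map carries an isotypic rigid object to a quantum cluster monomial with the correct quantum-binomial coefficients; this should follow from a direct Hall-algebra computation or from the finite-type expansion result applied in an appropriate seed.

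Granting these two ingredients, I would deduce $X_L\in\mathcal{EH}_{|k|}(Q)$ by a double induction whose outer parameter is the total dimension of $L$ (the sum of $|\underline{\dim}M_0|$ and $|\underline{\dim}\,\mathrm{soc}\,I_M|$) and whose inner step, for a fixed total dimension, is a downward induction along the degeneration order; in finite type this order is well founded and, on each fixed dimension vector, its unique maximal element is the rigid module. If $L=0$ or $L$ is indecomposable, then $X_L$ is $1$ or a cluster variable; if $L$ is isotypic rigid, then $X_L=q^{c}(X_X)^{a}$ is a power of a cluster variable. Otherwise write $L=A\oplus B$ with $A$ a full isotypic component of $L$ and $B$ its complement, so $A$ and $B$ have no common summand and $\dim A,\dim B<\dim L$; then $X_A\in\mathcal{EH}_{|k|}(Q)$ by the isotypic case and $X_B\in\mathcal{EH}_{|k|}(Q)$ by the outer induction, and the multiplication formula yields
\[ X_L = q^{-c}\Bigl( X_A X_B - \sum_E c_E\,X_E \Bigr), \]
where each $E$ shares the total dimension of $L$ but lies strictly above $L$ in the degeneration order, so $X_E\in\mathcal{EH}_{|k|}(Q)$ by the inner induction; the base case of the inner induction is $L$ rigid, handled by the isotypic case or, for $L$ non-isotypic rigid, by the same splitting together with $\mathrm{Ext}^1_{\mathcal{C}}(A,B)=0$ and the outer induction. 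This closes the argument and gives $\mathcal{AH}_{|k|}(Q)\subseteq\mathcal{EH}_{|k|}(Q)$.

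The step I expect to be the main obstacle is establishing the quantum multiplication formula with exactly the two features the induction needs: that the coefficient of $X_{M\oplus N}$ is an honest power of $q$, and that all the remaining middle terms $E$ are proper degenerations of $M\oplus N$. Over a finite field the various $q$-twists attached to subobjects and to the toric frame must be tracked precisely for this to come out right, so this is essentially a finite-field, cluster-category refinement of the Caldero--Keller multiplication theorem and will contain the bulk of the work. A secondary point requiring care is the bookkeeping of the degeneration order for objects of $\mathcal{C}_{\widetilde{Q}}$ carrying shifted-injective summands, together with the verification that its maximal elements are exactly the rigid objects, which is what makes the inner induction terminate at the quantum-cluster-monomial case.
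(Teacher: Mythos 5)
There is a genuine gap, and it is structural rather than a fixable detail: the statement you are asked to prove is the paper's \emph{conjecture} for quivers of \emph{affine} type (the paper offers no proof of it), but your argument is written for finite type and in fact reproduces, almost step by step, the paper's own proof of Theorem \ref{main} (the combination of Theorem \ref{hall multi}, Proposition \ref{exchange2} and Lemma \ref{easy}, followed by induction on self-extensions/degeneration). Every place where your induction gets off the ground uses finite representation type explicitly: ``since $Q$ is of finite type, \dots every cluster variable is $X_L$ for a rigid indecomposable $L$'', ``here finite representation type is used so that $L$ is a direct sum of rigid indecomposable objects'', and ``if $L$ is indecomposable, then $X_L$ is $1$ or a cluster variable''. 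None of these survive in affine type.

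Concretely, for an affine quiver the indecomposable regular modules lying in tubes (for instance any indecomposable of dimension vector $\delta$ in a homogeneous tube) are not rigid, so $L$ is in general not a direct sum of rigid indecomposables, and $X_L$ for such an indecomposable $L$ is \emph{not} a cluster variable; establishing that these characters nevertheless lie in $\mathcal{EH}_{|k|}(Q)$ is precisely the open content of the conjecture, and your induction simply assumes it at the base case. Your inner induction also breaks: on the dimension vector $\delta$ there is no rigid module, so the degeneration order on that fixed dimension vector has no rigid maximal element to serve as the terminal case, and the multiplication formula only ever rewrites $X_L$ in terms of characters of objects with strictly fewer self-extensions, never producing the regular characters themselves from cluster variables without some additional input (in the known classical affine results \cite{DXX} and the quantum Kronecker case \cite{DX0}\cite{DX} this requires genuinely new elements and identities attached to the tubes, e.g.\ the character of a regular module of dimension $\delta$ expressed through a difference of cluster monomials). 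So what you have is a correct outline of the finite-type theorem already proved in the paper; it does not address, and cannot be patched in any routine way to address, the affine statement.
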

\noindent\\[4mm]

\noindent{\bbb 3\quad Proof of the main theorem}\\[0.1cm]
In this section, we fix a  quiver $Q$ of finite type with $n$
vertices. Firstly, we recall some notations. For any
$k\widetilde{Q}-$modules $M,N$ and $E$, denote by
$\varepsilon_{MN}^{E}$  the cardinality of the set
$\mathrm{Ext}_{k\widetilde{Q}}^{1}(M,N)_{E}$ which is the subset of
$ \mathrm{Ext}_{k\widetilde{Q}}^{1}(M,N)$ consisting of those
equivalence classes of short exact sequences with middle term
isomorphic to $E$ (\cite[Section 4]{Hubery}). Let $F^M_{AB}$ be the
number of submodules $U$ of $M$ such that $U$ is isomorphic to $B$
and $M/U$ is isomorphic to $A$. Then by definition, we have
$$|\mathrm{Gr}_{\underline{e}}(M)|=\sum_{A, B;
\underline{\mathrm{dim}}B=\underline{e}}F_{AB}^M.
$$
Denote by
$[M,N]^{1}=\mathrm{dim}_{k}\mathrm{Ext}_{k\widetilde{Q}}^{1}(M,N)$
and $[M,N]=\mathrm{dim}_{k}\mathrm{Hom}_{k\widetilde{Q}}(M,N).$
 The following Theorem \ref{hall multi} proved in \cite{DX} and Proposition
\ref{exchange2} give the explicit relations between $X_{N}X_{M}$ and
$X_{N\oplus M}$.
\begin{thm}{\cite{DX}}\label{hall multi}
Let $M$ and $N$ be $kQ$-modules. Then
$$q^{[M,N]^{1}}X_{M}X_{N}=q^{\frac{1}{2}\Lambda((\widetilde{I}-\widetilde{R}^{tr})\underline{m},
(\widetilde{I}-\widetilde{R}^{tr})\underline{n})}
\sum_{E}\varepsilon_{MN}^{E}X_E.$$
\end{thm}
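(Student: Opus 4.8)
The plan is to expand both sides in the distinguished basis $\{X^{\underline v}:\underline v\in L\}$ of the based quantum torus and compare coefficients monomial by monomial. By rule~(1), $X_M=\sum_{\underline e}|\mathrm{Gr}_{\underline e}M|\,q^{-\frac12\langle\underline e,\underline m-\underline e\rangle}X^{-\widetilde B\underline e-(\widetilde I-\widetilde R^{tr})\underline m}$, and similarly for $X_N$, where $\underline e$ (resp. $\underline f$) runs over the subdimension vectors of $M$ (resp. $N$). Using the quantum torus rule $X^aX^b=q^{\Lambda(a,b)/2}X^{a+b}$ one multiplies out $X_MX_N$: the monomial attached to a pair $(\underline e,\underline f)$ is $X^{-\widetilde B(\underline e+\underline f)-(\widetilde I-\widetilde R^{tr})(\underline m+\underline n)}$, carried with a $q$-exponent equal to the sum of the two Euler-form contributions and the twist $\tfrac12\Lambda\bigl(-\widetilde B\underline e-(\widetilde I-\widetilde R^{tr})\underline m,\;-\widetilde B\underline f-(\widetilde I-\widetilde R^{tr})\underline n\bigr)$. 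On the other side, any $E$ with $\varepsilon_{MN}^E\ne0$ sits in a short exact sequence $0\to N\to E\xrightarrow{\pi}M\to0$, hence is again a $kQ$-module (since $Q$ is a full subquiver, extensions of $kQ$-modules stay $kQ$-modules) with $\underline{\dim}E=\underline m+\underline n$, so rule~(1) gives $X_E=\sum_{\underline g}|\mathrm{Gr}_{\underline g}E|\,q^{-\frac12\langle\underline g,\underline m+\underline n-\underline g\rangle}X^{-\widetilde B\underline g-(\widetilde I-\widetilde R^{tr})(\underline m+\underline n)}$. Thus both sides are supported on the monomials $X^{-\widetilde B\underline g-(\widetilde I-\widetilde R^{tr})(\underline m+\underline n)}$ (the compatibility relation also forces $\widetilde B$ to be injective, so distinct $\underline g$ give distinct monomials and the comparison is well posed), matched on the left by $\underline g=\underline e+\underline f$.

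After this reduction the theorem becomes, for each fixed $\underline g$, a scalar identity
\begin{multline*}
q^{[M,N]^1}\sum_{\underline e+\underline f=\underline g}|\mathrm{Gr}_{\underline e}M|\,|\mathrm{Gr}_{\underline f}N|\,q^{\alpha(\underline e,\underline f)}\\
=q^{\frac12\Lambda((\widetilde I-\widetilde R^{tr})\underline m,\;(\widetilde I-\widetilde R^{tr})\underline n)}\sum_E\varepsilon_{MN}^E\,|\mathrm{Gr}_{\underline g}E|\,q^{-\frac12\langle\underline g,\;\underline m+\underline n-\underline g\rangle},
\end{multline*}
where $\alpha(\underline e,\underline f)$ abbreviates the $q$-exponent produced on the left. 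The first step is to eliminate all occurrences of $\Lambda$: using $\Lambda(-\widetilde B)=\bigl[\begin{smallmatrix}I_n\\0\end{smallmatrix}\bigr]$ together with $\widetilde B=\widetilde R^{tr}-\widetilde R$ and $B=R^{tr}-R$, every expression $\Lambda(-\widetilde B\underline v,-\widetilde B\underline w)$, $\Lambda(-\widetilde B\underline v,(\widetilde I-\widetilde R^{tr})\underline w)$ and $\Lambda((\widetilde I-\widetilde R^{tr})\underline v,(\widetilde I-\widetilde R^{tr})\underline w)$ collapses to a combination of the ordinary pairing of $\underline v,\underline w\in\mathbb Z^n$ and of Euler forms; plugging this in, the quantum-torus twist and the explicit $\Lambda$-factor on the right merge with the prefactors of $X_M,X_N,X_E$ into pure Euler-form exponents in $\underline e,\underline f,\underline m,\underline n$. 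Writing $|\mathrm{Gr}_{\underline e}M|=\sum_{A,B}F^M_{AB}$, $|\mathrm{Gr}_{\underline f}N|=\sum_{A,B}F^N_{AB}$, $|\mathrm{Gr}_{\underline g}E|=\sum_{C,D}F^E_{CD}$, the surviving identity is a $q$-weighted Hall-number identity, to be matched term by term.

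The heart of the matter — and the step I expect to be the main obstacle — is establishing this Hall identity with the correct $q$-powers. I would deduce it from the Riedtmann/Green formula for $k\widetilde Q$, available because $\widetilde Q$ is acyclic and hence $k\widetilde Q$ is hereditary. Concretely: given $0\to N\to E\xrightarrow{\pi}M\to0$ and a submodule $V\subseteq E$ with $\underline{\dim}V=\underline g$, set $V'=V\cap N$ (via $N\hookrightarrow E$) and $V''=\pi(V)$; one obtains induced short exact sequences $0\to V'\to V\to V''\to0$ and $0\to N/V'\to E/V\to M/V''\to0$, whence $\underline g=\underline f'+\underline e'$ with $\underline f'=\underline{\dim}V'$, $\underline e'=\underline{\dim}V''$. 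Summing over all $E$ with weight $\varepsilon_{MN}^E$ and over all such $V$ is, by the associativity of the twisted Ringel--Hall algebra of $k\widetilde Q$ (equivalently, Green's compatibility of its multiplication and comultiplication), the same as summing over all pairs (submodule of $M$, submodule of $N$) of the prescribed dimension vectors; the ``gluing multiplicity'' in this identification is exactly the prefactor $q^{[M,N]^1}$ together with the Euler-form correction coming from $[N/V',V'']$, $[N/V',V'']^1$ and the analogous terms. Carrying out the $q$-power bookkeeping — reconciling this combinatorial multiplicity, the non-commutativity twist of the quantum torus, and the Euler forms built into the definition of $X_?$ — is the technical core; once it checks out, matching coefficients yields the theorem. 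In practice I would organise the whole computation inside the twisted Ringel--Hall algebra $\mathcal H_{|k|}(k\widetilde Q)$, where the identity is the known hereditary cluster multiplication formula, and transport it back along the map $X_?$.
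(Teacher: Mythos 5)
The paper itself gives no proof of Theorem \ref{hall multi}: it is imported from \cite{DX}, and only Proposition \ref{exchange2} is proved here. The route you outline (expand $X_M$, $X_N$, $X_E$ in the based quantum torus, group monomials by $\underline g=\underline e+\underline f$, reduce to a $q$-weighted Hall-number identity, and prove that identity via the Riedtmann and Green formulas for the hereditary algebra $k\widetilde Q$) is indeed the strategy of the cited proof, and your preliminary reductions are sound: the compatibility relation (\ref{eq:simply_laced_compatible}) does force $\widetilde B$ to have full column rank, and every $E$ with $\varepsilon^E_{MN}\neq 0$ is a $kQ$-module with $\underline{\mathrm{dim}}\,E=\underline m+\underline n$.

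However, the proposal stops exactly where the theorem begins. The whole content of the statement is that, after Riedtmann's formula (converting $\varepsilon^E_{MN}$ into Hall numbers together with $|\mathrm{Hom}(M,N)|$ and automorphism factors) and Green's formula are applied, all $q$-exponents --- the Euler-form prefactors in the definition of $X_?$, the torus twist $\frac12\Lambda(-\widetilde B\underline e-(\widetilde I-\widetilde R^{tr})\underline m,\,-\widetilde B\underline f-(\widetilde I-\widetilde R^{tr})\underline n)$ reduced through $\widetilde B^{tr}\Lambda=(I_n\,|\,0)$, and the counting factors --- assemble into the two constants $q^{[M,N]^1}$ and $q^{\frac12\Lambda((\widetilde I-\widetilde R^{tr})\underline m,(\widetilde I-\widetilde R^{tr})\underline n)}$ uniformly in $\underline e,\underline f$ and $E$; you explicitly defer this (``once it checks out''), so in particular the exponent $[M,N]^1$ on the left, as opposed to $[M,N]$ or $\langle M,N\rangle$, is never established. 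Two specific points would also trip you up as written. First, $\Lambda((\widetilde I-\widetilde R^{tr})\underline v,(\widetilde I-\widetilde R^{tr})\underline w)$ does \emph{not} collapse to Euler forms: compatibility only controls $\Lambda$ against the column space of $\widetilde B$; this term survives on the left and must instead cancel against the explicit prefactor on the right, so your ``elimination of all occurrences of $\Lambda$'' has to be restated as a cancellation. Second, Green's formula is not ``equivalently'' the associativity of the twisted Ringel--Hall algebra: associativity is elementary and insufficient, whereas Green's compatibility of multiplication and comultiplication (where hereditarity of $k\widetilde Q$ enters, and where the cross term of Euler-form type and the automorphism factors arise) is the actual engine, to be used jointly with Riedtmann's formula. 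Until that bookkeeping is carried out, the argument is a plausible plan rather than a proof.
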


Let $M$ be any $kQ-$module  and $I$ any injective
$k\widetilde{Q}-$module. Define
$$\mathrm{Hom}_{k\widetilde{Q}}(M,I)_{BI'}:=\{f:M\longrightarrow I|ker f\cong B,coker f\cong
I'\}.$$ Note that $I'$ is an injective $k\widetilde{Q}-$module. The
following result, together with Theorem \ref{hall multi}, is
essential for us to prove Theorem \ref{main}.
\begin{pro}\label{exchange2}
With the above notations, we have
$$q^{[M,I]}X_{M}X_{I[-1]}=q^{\frac{1}{2}\Lambda((\widetilde{I}-\widetilde{R}^{tr})\underline{m},
-\mathrm{\underline{dim}}soc I)}
\sum_{B,I'}|\mathrm{Hom}_{k\widetilde{Q}}(M,I)_{BI'}|X_{B\oplus
I'[-1]}.$$
\end{pro}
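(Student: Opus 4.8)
The plan is to follow the pattern of Theorem~\ref{hall multi}: expand both sides into $\ZZ[q^{\pm1/2}]$-linear combinations of the quantum-torus monomials $X^{\mathbf c}$ and then compare coefficients, the crucial simplification being that $X_{I[-1]}=X^{\underline{\mathrm{dim}}\,\mathrm{soc}\,I}$ is a single invertible monomial. Put $\alpha(\underline v):=-\widetilde B\underline v-(\widetilde I-\widetilde R^{tr})\underline m$. From the definition of $X_M$ and the relation $X^{\mathbf c}X^{\mathbf d}=q^{\Lambda(\mathbf c,\mathbf d)/2}X^{\mathbf c+\mathbf d}$, the left-hand side expands as
\[q^{[M,I]}X_MX_{I[-1]}=\sum_{V\subseteq M}q^{\,[M,I]-\frac12\langle\underline v,\underline m-\underline v\rangle+\frac12\Lambda(\alpha(\underline v),\,\underline{\mathrm{dim}}\,\mathrm{soc}\,I)}\;X^{\alpha(\underline v)+\underline{\mathrm{dim}}\,\mathrm{soc}\,I},\]
the sum ranging over all submodules $V$ of $M$, with $\underline v=\underline{\mathrm{dim}}\,V$.

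The work is on the right-hand side. Since $k\widetilde Q$ is hereditary, the cokernel $I'$ of any $f\in\mathrm{Hom}(M,I)$ is again injective, so each $X_{B\oplus I'[-1]}$ is a cluster character in the sense of Definition~\ref{def}. Writing $B=\ker f$, $I'=\mathrm{coker}\,f$ and using $\underline b=\underline m-\underline{\mathrm{dim}}\,\mathrm{im}\,f$, $\underline{i'}=\underline i-\underline{\mathrm{dim}}\,\mathrm{im}\,f$, I would first record the two dimension-vector simplifications $\underline b-\underline w-\underline{i'}=\underline m-\underline w-\underline i$ and $-(\widetilde I-\widetilde R^{tr})\underline b+\underline{\mathrm{dim}}\,\mathrm{soc}\,I'=-(\widetilde I-\widetilde R^{tr})\underline m+\underline{\mathrm{dim}}\,\mathrm{soc}\,I$; the latter rests on the exact sequence $0\to\mathrm{im}\,f\to I\to I'\to 0$ together with the hereditary fact that the dimension vector of an injective module is recovered from its socle via the transpose of the Cartan matrix, which gives $\underline{\mathrm{dim}}\,\mathrm{soc}\,I-\underline{\mathrm{dim}}\,\mathrm{soc}\,I'=(\widetilde I-\widetilde R^{tr})\,\underline{\mathrm{dim}}\,\mathrm{im}\,f$. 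With these, the summand of $X_{B\oplus I'[-1]}$ indexed by $W\subseteq B$ depends only on $\underline w=\underline{\mathrm{dim}}\,W$ and equals $q^{-\frac12\langle\underline w,\underline m-\underline w-\underline i\rangle}X^{\alpha(\underline w)+\underline{\mathrm{dim}}\,\mathrm{soc}\,I}$. Hence the double sum reorganizes: $\sum_{B,I'}|\mathrm{Hom}(M,I)_{BI'}|\,|\mathrm{Gr}_{\underline w}B|$ counts the pairs $(f,W)$ with $f: M\to I$ and $W\subseteq\ker f$ of dimension $\underline w$, and summing over $W\subseteq M$ first this inner count is $|\mathrm{Hom}(M/W,I)|=q^{[M/W,I]}=q^{\langle\underline m-\underline w,\underline i\rangle}$ ($I$ being injective). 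This rewrites the right-hand side as $q^{\frac12\Lambda((\widetilde I-\widetilde R^{tr})\underline m,\,-\underline{\mathrm{dim}}\,\mathrm{soc}\,I)}\sum_{W\subseteq M}q^{\,\langle\underline m-\underline w,\underline i\rangle-\frac12\langle\underline w,\underline m-\underline w-\underline i\rangle}X^{\alpha(\underline w)+\underline{\mathrm{dim}}\,\mathrm{soc}\,I}$.

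Now both sides are sums over the submodules of $M$ with the same monomials, so it is enough to match the coefficient attached to a submodule of dimension vector $\delta$. Cancelling the common factor and using $[M,I]=\langle\underline m,\underline i\rangle$, everything collapses to the single scalar identity
\[\langle\delta,\underline i\rangle=\Lambda\big(\widetilde B\delta,\ \underline{\mathrm{dim}}\,\mathrm{soc}\,I\big)\qquad(\delta\in\ZZ^n),\]
which I would deduce from the compatibility hypothesis \eqref{eq:simply_laced_compatible} (equivalently $\widetilde B^{\,T}\Lambda=\widetilde I^{\,T}$), the identity $\underline i=C_{\widetilde Q}^{\,T}\,\underline{\mathrm{dim}}\,\mathrm{soc}\,I$ for the injective $I$, and the bilinear-form expression of the Euler form, so that both sides reduce to $\sum_{i=1}^{n}\delta_i\,(\underline{\mathrm{dim}}\,\mathrm{soc}\,I)_i$. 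I expect the obstacle to be bookkeeping rather than ideas: the main care is in establishing the socle/dimension-vector simplification for the cokernel term and in verifying that the twists generated by the quantum-torus multiplication are exactly absorbed by the prefactor $q^{\frac12\Lambda((\widetilde I-\widetilde R^{tr})\underline m,\,-\underline{\mathrm{dim}}\,\mathrm{soc}\,I)}$ — which is precisely what the displayed scalar identity encodes.
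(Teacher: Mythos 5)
Your proposal is correct and follows the same overall route as the paper's proof: expand both $X_MX_{I[-1]}$ and $\sum_{B,I'}|\mathrm{Hom}_{k\widetilde{Q}}(M,I)_{BI'}|X_{B\oplus I'[-1]}$ in the quantum-torus basis, use the compatibility condition (so that $\Lambda(-\widetilde{B}\underline{h},\underline{\mathrm{dim}}\,\mathrm{soc}\,I)=-[H,I]$, which is exactly your scalar identity $\langle\delta,\underline{i}\rangle=\Lambda(\widetilde{B}\delta,\underline{\mathrm{dim}}\,\mathrm{soc}\,I)$), and use the identity $(\widetilde{I}-\widetilde{R}^{tr})\underline{i}=\underline{\mathrm{dim}}\,\mathrm{soc}\,I$ for injectives to collapse all monomials on the right to $X^{-\widetilde{B}\underline{h}-(\widetilde{I}-\widetilde{R}^{tr})\underline{m}+\underline{\mathrm{dim}}\,\mathrm{soc}\,I}$, then match coefficients via $[M,I]=\langle\underline{m},\underline{i}\rangle$. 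The one step you do genuinely differently is the counting: the paper routes it through Hubery's Hall-number identities --- $|\mathrm{Hom}_{k\widetilde{Q}}(M,I)_{BI'}|=\sum_A|\mathrm{Aut}(A)|F^M_{AB}F^I_{I'A}$, the associativity $\sum_BF^B_{XY}F^M_{AB}=\sum_GF^G_{AX}F^M_{GY}$, and the evaluation $\sum_{A,I',X}|\mathrm{Aut}(A)|F^I_{I'A}F^G_{AX}=q^{[G,I]}$ --- whereas you count the pairs $(f,W)$ with $W\subseteq\ker f$ in two ways and use injectivity of $I$ to get $|\mathrm{Hom}(M/W,I)|=q^{\langle\underline{m}-\underline{w},\underline{i}\rangle}$ when $W$ is fixed first. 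Your double count is more elementary and self-contained; the paper's version keeps the computation inside the Hall-algebra formalism already used for Theorem \ref{hall multi}. The two are equivalent after summing the paper's identities over isomorphism classes, and the rest of your bookkeeping (the dimension-vector identities for $\underline{b}$, $\underline{i'}$ and the socle of the cokernel) coincides with the paper's.
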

\begin{proof}We calculate
\begin{eqnarray}
% \nonumber to remove numbering (before each equation)
   && X_{M}X_{I[-1]}  \nonumber\\
   &=& \sum_{G,H}q^{-\frac{1}{2}\langle H,G\rangle}F^{M}_{GH}X^{-\widetilde{B}\underline{h}-(\widetilde{I}-\widetilde{R}^{tr})\underline{m}}
   X^{\mathrm{\underline{dim}}soc I}\nonumber\\
  &=& \sum_{G,H}q^{-\frac{1}{2}\langle H,G\rangle}F^{M}_{GH}q^{\frac{1}{2}\Lambda(-\widetilde{B}\underline{h}-(\widetilde{I}-\widetilde{R}^{tr})\underline{m},
\mathrm{\underline{dim}}soc
I)}X^{-\widetilde{B}\underline{h}-(\widetilde{I}-\widetilde{R}^{tr})\underline{m}+\mathrm{\underline{dim}}soc
I}
\nonumber\\
 &=&q^{\frac{1}{2}\Lambda(-(\widetilde{I}-\widetilde{R}^{tr})\underline{m},
\mathrm{\underline{dim}}soc I)}\sum_{G,H}q^{-\frac{1}{2}\langle
H,G\rangle}q^{\frac{1}{2}\Lambda(-\widetilde{B}\underline{h},\mathrm{\underline{dim}}soc
I)}F^{M}_{GH}X^{-\widetilde{B}\underline{h}-(\widetilde{I}-\widetilde{R}^{tr})\underline{m}+\mathrm{\underline{dim}}soc
I}\nonumber\\
 &=&q^{\frac{1}{2}\Lambda((\widetilde{I}-\widetilde{R}^{tr})\underline{m},
-\mathrm{\underline{dim}}soc I)}\sum_{G,H}q^{-\frac{1}{2}\langle
H,G\rangle}q^{-\frac{1}{2}[H,I]}F^{M}_{GH}X^{-\widetilde{B}\underline{h}-(\widetilde{I}-\widetilde{R}^{tr})\underline{m}+\mathrm{\underline{dim}}soc
I}.\nonumber
\end{eqnarray}
Here we use the fact that
$$\Lambda(-\widetilde{B}\underline{h},\mathrm{\underline{dim}}soc
I)=-\underline{h}^{tr}\widetilde{B}^{tr}\Lambda(\mathrm{\underline{dim}}soc
I)=-\underline{h}^{tr}(\mathrm{\underline{dim}}soc I)=-[H,I].$$
 Note that if we have the following commutative diagram
$$
\xymatrix{&0\ar[d]&0\ar[d]\\
&Y\ar@{=}[r]\ar[d]&Y\ar[d]\\
 0\ar[r]&B\ar[r]\ar[d]&M\ar[r]\ar[d]&I\ar[r]&I'\ar[r]&0\\
0\ar[r]&X\ar[r]\ar[d]&G\ar[d]\\
&0&0}
$$
and short exact
 sequences
$$0\longrightarrow B\longrightarrow M \longrightarrow A\longrightarrow 0$$ $$0\longrightarrow A\longrightarrow I\longrightarrow I'\longrightarrow 0,$$
then by \cite{Hubery} it follows that
$$\sum_{B}F^{B}_{XY}F^{M}_{AB}=\sum_{G}F^{G}_{AX}F^{M}_{GY},\
|\mathrm{Hom}_{k\widetilde{Q}}(M,I)_{BI'}|=\sum_{A}|\mathrm{Aut}(A)|F^{M}_{AB}F^{I}_{I'A}$$
and
$$\sum_{A,I',X}|\mathrm{Aut}(A)|F^{I}_{I'A}F^{G}_{AX}=\sum_{I',X}|\mathrm{Hom}_{k\widetilde{Q}}(G,I)_{XI'}|=q^{[
G,I]}=q^{\langle G,I\rangle}.$$ By \cite[Lemma 1]{Hubery}, we have
$(\widetilde{I}-\widetilde{R}^{tr})\underline{i}=\mathrm{\underline{dim}}
soc I$. Now we can  calculate the  term
\begin{eqnarray}
% \nonumber to remove numbering (before each equation)
   &&  \sum_{B,I'}|\mathrm{Hom}_{k\widetilde{Q}}(M,I)_{BI'}|X_{B\oplus
I'[-1]}  \nonumber\\
   &=& \sum_{A,B,I',X,Y}|\mathrm{Aut}(A)|F^{M}_{AB}F^{I}_{I'A}q^{-\frac{1}{2}\langle
Y,X-I'\rangle}F^{B}_{XY}X^{-\widetilde{B}\underline{y}-(\widetilde{I}-\widetilde{R}^{tr})\underline{b}+\mathrm{\underline{dim}}
soc I'}\nonumber\\
  &=& \sum_{A,G,I',X,Y}q^{-\frac{1}{2}\langle
Y,X-I'\rangle}|\mathrm{Aut}(A)|F^{I}_{I'A}F^{G}_{AX}F^{M}_{GY}X^{-\widetilde{B}\underline{y}-(\widetilde{I}-\widetilde{R}^{tr})\underline{b}+\mathrm{\underline{dim}}
soc I'}.\nonumber
\end{eqnarray}
Note that  we have  the  following facts
$$\underline{i'}+\underline{a}=\underline{i},\
\underline{x}+\underline{a}=\underline{g}\Longrightarrow
\underline{x}-\underline{i'}=\underline{g}-\underline{i},$$ and
\begin{eqnarray}
% \nonumber to remove numbering (before each equation)
   &&  -\widetilde{B}\underline{y}-(\widetilde{I}-\widetilde{R}^{tr})\underline{b}+\mathrm{\underline{dim}}
soc I' \nonumber\\
   &=&-\widetilde{B}\underline{h}-(\widetilde{I}-\widetilde{R}^{tr})(\underline{m}-\underline{i}-\underline{i'})+\mathrm{\underline{dim}}
soc I'\nonumber\\
  &=& -\widetilde{B}\underline{h}-(\widetilde{I}-\widetilde{R}^{tr})\underline{m}
  +(\widetilde{I}-\widetilde{R}^{tr})(\underline{i}-\underline{i'})+\mathrm{\underline{dim}}
soc I'
\nonumber\\
 &=& -\widetilde{B}\underline{h}-(\widetilde{I}-\widetilde{R}^{tr})\underline{m}
  +(\widetilde{I}-\widetilde{R}^{tr})\underline{i}
\nonumber\\
 &=&-\widetilde{B}\underline{h}-(\widetilde{I}-\widetilde{R}^{tr})\underline{m}
  +\mathrm{\underline{dim}}
soc I.\nonumber
\end{eqnarray}
Hence
\begin{eqnarray}
% \nonumber to remove numbering (before each equation)
   &&  \sum_{B,I'}|\mathrm{Hom}_{k\widetilde{Q}}(M,I)_{BI'}|X_{B\oplus
I'[-1]}  \nonumber\\
 &=&\sum_{G,H}q^{\langle
G,I\rangle}q^{-\frac{1}{2}\langle
H,G-I\rangle}F^{M}_{GH}X^{-\widetilde{B}\underline{h}-(\widetilde{I}-\widetilde{R}^{tr})\underline{m}+\mathrm{\underline{dim}}
soc I}
\nonumber\\
 &=&\sum_{G,H}q^{\langle
M,I\rangle}q^{-\frac{1}{2}\langle H,I\rangle}q^{-\frac{1}{2}\langle
H,G\rangle}F^{M}_{GH}X^{-\widetilde{B}\underline{h}-(\widetilde{I}-\widetilde{R}^{tr})\underline{m}+\mathrm{\underline{dim}}
soc I}\nonumber\\
 &=&q^{[
M,I]}\sum_{G,H}q^{-\frac{1}{2}[H,I]}q^{-\frac{1}{2}\langle
H,G\rangle}F^{M}_{GH}X^{-\widetilde{B}\underline{h}-(\widetilde{I}-\widetilde{R}^{tr})\underline{m}+\mathrm{\underline{dim}}
soc I}.\nonumber
\end{eqnarray}
This finishes the proof.
\end{proof}
\begin{rem}
Proposition \ref{exchange2} holds for any acyclic quiver.
\end{rem}
 The following lemma is well-known. Here we give a sketch of the proof following \cite[Lemma 8(b)]{CK2005}.
\begin{lem}\label{easy}
        Let $$M \longrightarrow E \longrightarrow N \xrightarrow{\epsilon} M[1]$$
        be a non-split triangle in $\mathcal C_{\widetilde{Q}}.$ Then
        $$\mathrm{dim}_{k}\mathrm{Ext}^{1}_{\mathcal C_{\widetilde{Q}}}(E,E) < \mathrm{dim}_{k}\mathrm{Ext}^{1}_{\mathcal C_{\widetilde{Q}}}(M \oplus N, M \oplus N).$$
\end{lem}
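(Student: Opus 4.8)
The plan is purely formal triangulated-category bookkeeping: apply the cohomological functors $\mathrm{Hom}_{\mathcal C_{\widetilde Q}}(-,E)$, $\mathrm{Hom}_{\mathcal C_{\widetilde Q}}(M,-)$ and $\mathrm{Hom}_{\mathcal C_{\widetilde Q}}(N,-)$ to the given triangle $M\xrightarrow{u}E\xrightarrow{v}N\xrightarrow{\epsilon}M[1]$ and count dimensions in the resulting long exact sequences. Write $[X,Y]^1:=\dim_k\mathrm{Ext}^1_{\mathcal C_{\widetilde Q}}(X,Y)=\dim_k\mathrm{Hom}_{\mathcal C_{\widetilde Q}}(X,Y[1])$; since $\widetilde Q$ is acyclic these numbers are finite, and $\mathrm{Ext}^1$ is additive in each variable, so $[M\oplus N,M\oplus N]^1=[M,M]^1+[N,N]^1+[M,N]^1+[N,M]^1$. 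It therefore suffices to bound $[E,E]^1$ by this sum minus at least $1$.

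Applying $\mathrm{Hom}(-,E)$ to the triangle, the three-term exact piece $\mathrm{Ext}^1(N,E)\xrightarrow{v^*}\mathrm{Ext}^1(E,E)\xrightarrow{u^*}\mathrm{Ext}^1(M,E)$ yields $[E,E]^1\le[N,E]^1+[M,E]^1$. Applying $\mathrm{Hom}(M,-)$, the piece $\mathrm{Ext}^1(M,M)\xrightarrow{u_*}\mathrm{Ext}^1(M,E)\xrightarrow{v_*}\mathrm{Ext}^1(M,N)$ yields $[M,E]^1\le[M,M]^1+[M,N]^1$. Applying $\mathrm{Hom}(N,-)$, one gets the exact sequence $\mathrm{Hom}(N,N)\xrightarrow{\epsilon_*}\mathrm{Ext}^1(N,M)\xrightarrow{u_*}\mathrm{Ext}^1(N,E)\xrightarrow{v_*}\mathrm{Ext}^1(N,N)$, in which $\epsilon_*$ is post-composition with $\epsilon$; reading off dimensions gives $[N,E]^1\le\big([N,M]^1-\dim_k\mathrm{im}\,\epsilon_*\big)+[N,N]^1$.

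The hypothesis enters only here: a non-split triangle has $\epsilon\neq0$ in $\mathrm{Hom}(N,M[1])=\mathrm{Ext}^1(N,M)$, and since $\epsilon_*(\mathrm{id}_N)=\epsilon$ we have $\dim_k\mathrm{im}\,\epsilon_*\ge1$. Combining the three inequalities,
\[
[E,E]^1\le[M,M]^1+[N,N]^1+[M,N]^1+[N,M]^1-\dim_k\mathrm{im}\,\epsilon_*\le[M\oplus N,M\oplus N]^1-1,
\]
which is the desired strict inequality. There is no serious obstacle: one does not even need the $2$-Calabi--Yau property of $\mathcal C_{\widetilde Q}$, only Hom-finiteness. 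The one point requiring care is the identification of the connecting map in the $\mathrm{Hom}(N,-)$-sequence with post-composition by $\epsilon$, so that $\mathrm{id}_N$ is sent to the class of the triangle and hence $\mathrm{im}\,\epsilon_*\neq0$; the rest is routine exact-sequence arithmetic.
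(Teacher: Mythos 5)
Your proof is correct, and its skeleton is the same as the paper's (which in turn follows Lemma~8(b) of Caldero--Keller): apply Hom-functors to the triangle, read off three-term exact pieces, and let the non-splitting of the triangle supply the strict inequality. There is one genuine difference in how the pieces are assembled. The paper uses only the contravariant functors $\mathrm{Ext}^{1}_{\mathcal C_{\widetilde{Q}}}(-,L)$ for $L\in\{M,N,E\}$: this gives bounds on $\dim_{k}\mathrm{Ext}^{1}(E,N)$ and $\dim_{k}\mathrm{Ext}^{1}(E,M)$ (the latter strict, because $\epsilon$ lies in the kernel of precomposition with $E\to N$), and it then needs the equality $\dim_{k}\mathrm{Ext}^{1}(E,N)+\dim_{k}\mathrm{Ext}^{1}(E,M)=\dim_{k}\mathrm{Ext}^{1}(N,E)+\dim_{k}\mathrm{Ext}^{1}(M,E)$ to connect these to the bound $\dim_{k}\mathrm{Ext}^{1}(E,E)\le\dim_{k}\mathrm{Ext}^{1}(N,E)+\dim_{k}\mathrm{Ext}^{1}(M,E)$; that equality is exactly the $2$-Calabi--Yau symmetry of the cluster category. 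You instead mix covariant and contravariant functors, extract the strictness from $\epsilon_{*}(\mathrm{id}_{N})=\epsilon\neq 0$ in the $\mathrm{Hom}(N,-)$-sequence, and feed all three estimates directly into $\dim_{k}\mathrm{Ext}^{1}(E,E)\le\dim_{k}\mathrm{Ext}^{1}(N,E)+\dim_{k}\mathrm{Ext}^{1}(M,E)$ with no appeal to Serre duality. Your remark that only Hom-finiteness is needed is therefore accurate, and your argument proves the statement in any Hom-finite triangulated category, which is slightly more general than the paper's; the paper's version is marginally shorter because the single contravariant long exact sequence, combined with the $2$-CY property that the cluster category possesses anyway, does all the work at once. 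Both proofs are complete and correct.
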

\begin{proof}
       For any object $L\in \mathcal C_{\widetilde{Q}}$, applying the  functor $\mathrm{Ext}^{1}_{\mathcal C_{\widetilde{Q}}}(-,L)$  to the above non-split triangle
       gives rise to the exact sequence
        $$0 \longrightarrow ker f_{L} \longrightarrow \mathrm{Ext}^{1}_{\mathcal C_{\widetilde{Q}}}(N,L)\xrightarrow{f_{L}}
        \mathrm{Ext}^{1}_{\mathcal C_{\widetilde{Q}}}(E,L) \xrightarrow{g_{L}} \mathrm{Ext}^{1}_{\mathcal C_{\widetilde{Q}}}(M,L)
         \longrightarrow coker g_{L}\longrightarrow 0$$
Thus we have
        $$\dim_{k}ker f_{L} + \dim_{k} \mathrm{Ext}^{1}_{\mathcal C_{\widetilde{Q}}}(E,L)+ \dim_{k} coker g_{L} =\dim_{k}
        \mathrm{Ext}^{1}_{\mathcal C_{\widetilde{Q}}}(N,L)+\dim_{k} \mathrm{Ext}^{1}_{\mathcal C_{\widetilde{Q}}}(M,L)$$
Hence
        $$\mathrm{dim}_{k}\mathrm{Ext}^{1}_{\mathcal C_{\widetilde{Q}}}(E,N) \leq
        \mathrm{dim}_{k}\mathrm{Ext}^{1}_{\mathcal C_{\widetilde{Q}}}(N,N)+\mathrm{dim}_{k}\mathrm{Ext}^{1}_{\mathcal C_{\widetilde{Q}}}(M,N)$$
$$\dim_{k}\mathrm{Ext}^{1}_{\mathcal C_{\widetilde{Q}}}(E,E) \leq \dim_{k}\mathrm{Ext}^{1}_{\mathcal
C_{\widetilde{Q}}}(N,E)+\mathrm{dim}_{k}\mathrm{Ext}^{1}_{\mathcal
C_{\widetilde{Q}}}(M,E).$$ Note that $0\neq\epsilon\in ker f_{M}$,
so we have
         $$\dim_{k}\mathrm{Ext}^{1}_{\mathcal C_{\widetilde{Q}}}(E,M) < \dim_{k}\mathrm{Ext}^{1}_{\mathcal C_{\widetilde{Q}}}(N,M)+\dim_{k}\mathrm{Ext}^{1}_{\mathcal C_{\widetilde{Q}}}(M,M).$$
Therefore
        \begin{align*}
\dim_{k}\mathrm{Ext}^{1}_{\mathcal C_{\widetilde{Q}}}(M \oplus N, M
\oplus N)
                &> \dim_{k}\mathrm{Ext}^{1}_{\mathcal C_{\widetilde{Q}}}(E,N) + \dim_{k}\mathrm{Ext}^{1}_{\mathcal C_{\widetilde{Q}}}(E,M) \\
                &=\dim_{k}\mathrm{Ext}^{1}_{\mathcal
C_{\widetilde{Q}}}(N,E)+\dim_{k}\mathrm{Ext}^{1}_{\mathcal C_{\widetilde{Q}}}(M,E)\\
                & \geq \dim_{k}\mathrm{Ext}^{1}_{\mathcal C_{\widetilde{Q}}}(E,E).
\end{align*}This proves our assertion.\end{proof}
\textit{Proof of  Theorem \ref{main}:} We  need to prove that for
any cluster character $X_{L}\in\mathcal{AH}_{|k|}(Q)$, then
$X_{L}\in\mathcal{EH}_{|k|}(Q)$.

Let $L\cong \bigoplus_{i=1}^{l}L_{i}^{\oplus n_{i}}, n_{i}\in
\mathbb{N}$ where $L_{i}\ (1\leq i\leq l)$ are indecomposable
objects in $\mathcal C_{\widetilde{Q}}$. Thus $X_{L_{i}}\ (1\leq
i\leq l)$ are in $\mathcal{EH}_{|k|}(Q)$. By Theorem \ref{hall
multi}, Proposition \ref{exchange2}  and Lemma \ref{easy}, we have
that
$$X^{n_{1}}_{L_{1}}X^{n_{2}}_{L_{2}}\cdots X^{n_{l}}_{L_{l}}=q^{\frac{1}{2}n_{L}}X_{L}+
\sum_{\dim_{k}\mathrm{Ext}^{1}_{\mathcal
C_{\widetilde{Q}}}(E,E)<\dim_{k}\mathrm{Ext}^{1}_{\mathcal
C_{\widetilde{Q}}}(L,L)}f_{n_{E}}(q^{\pm\frac{1}{2}})X_E$$ where
$n_{L}\in \mathbb{Z}$ and $f_{n_{E}}(q^{\pm\frac{1}{2}})\in
\mathbb{Z}[q^{\pm\frac{1}{2}}].$ Thus by induction, we can prove
that $X_{L}\in \mathcal{EH}_{|k|}(Q)$ which implies
$\mathcal{EH}_{|k|}(Q)=\mathcal{AH}_{|k|}(Q).$
\noindent\\[4mm]

\noindent\bf{\footnotesize Acknowledgements}\quad\rm {\footnotesize
The author would like
 to  thank Professor Jie Xiao, Doctor Fan Xu and Doctor Jie Sheng for very helpful conversations.}\\[4mm]

\noindent{\bbb{References}}
\begin{enumerate}
{\footnotesize \bibitem{BFZ}\label{BFZ} Berenstein A, Fomin S,
Zelevinsky A. Cluster algebras III: Upper bounds and double Bruhat
cells. Duke Math. J., 2005, 126: 1--52\\[-6.5mm]

\bibitem{BMRRT}\label{BMRRT} Buan A,  Marsh R, Reineke M, Reiten I, Todorov G. Tilting theory and cluster
combinatorics. Adv. Math., 2006, 204: 572--618\\[-6.5mm]

\bibitem{berzel}\label{berzel} Berenstein A, Zelevinsky A. Quantum cluster algebras. Adv.
Math., 2005, 195: 405--455\\[-6.5mm]

\bibitem{caldchap}\label{caldchap} Caldero P, Chapoton F.  Cluster algebras as Hall algebras of
quiver representations.  Comm. Math. Helv.,  2006, 81:
595--616\\[-6.5mm]

\bibitem{CK2005}\label{CK2005} Caldero P, Keller B. From triangulated categories to cluster
algebras. Invent. Math., 2008, 172(1): 169--211\\[-6.5mm]

\bibitem{DX0}\label{DX0} Ding M,  Xu F. Bases of the quantum cluster algebra of
the Kronecker quiver. arXiv:1004.
2349v4 [math.RT]\\[-6.5mm]

\bibitem{DX}\label{DX} Ding M,  Xu F. The multiplication theorem and bases in finite and
affine quantum cluster algebras.  arXiv:1006.3928v3 [math.RT]\\[-6.5mm]

\bibitem{DXX}\label{DXX} Ding M, Xiao J, Xu F.
Integral bases of cluster algebras and representations of tame
quivers. arXiv:0901.1937 [math.RT]\\[-6.5mm]

\bibitem{ca1}\label{ca1} Fomin S,  Zelevinsky A.  Cluster algebras. I. Foundations. J.
Amer. Math. Soc.,  2002,  15(2): 497--529\\[-6.5mm]

\bibitem{ca2}\label{ca2} Fomin S,  Zelevinsky A. Cluster algebras. II. Finite type
classification.  Invent. Math.,  2003,  154(1): 63--121\\[-6.5mm]

\bibitem{g1}\label{g1} Geiss C, Leclerc B, Schr$\ddot{o}$er J. Kac-Moody groups and
cluster algebras.  arXiv:1001.3545v2 [math.RT]\\[-6.5mm]

\bibitem{g2}\label{g2} Geiss C, Leclerc B, Schr$\ddot{o}$er J. Generic bases for cluster
algebras and the Chamber Ansatz.  arXiv:1004.2781v2 [math.RT]\\[-6.5mm]

\bibitem{j}\label{j}  Grabowski J, Launois S. Quantum cluster algebra structures on
quantum Grassmannians and their quantum Schubert cells: the
finite-type cases. Int Math
Res Notices, 2010, doi: 10.1093/imrn/rnq153\\[-6.5mm]

\bibitem{Hubery}\label{Hubery} Hubery A. Acyclic cluster algebras via Ringel-Hall algebras.
preprint, 2005, available at the author's homepage\\[-6.5mm]

\bibitem{lampe}\label{lam} Lampe P. A quantum cluster algebra of Kronecker type and the dual
canonical basis. Int Math Res Notices, 2010, doi: 10.1093/imrn/rnq162\\[-6.5mm]

\bibitem{fanqin}\label{fanqin} Qin F.  Quantum cluster variables via Serre polynomials.
arXiv:1004.4171v2 [math.QA]\\[-6.5mm]

\bibitem{rupel}\label{rupel} Rupel D. On a quantum analogue of the Caldero-Chapoton Formula. Int
Math Res Notices, 2010, doi:10.1093/imrn/rnq192
}
\end{enumerate}
\end{document}